\documentclass[a4paper, 12pt]{article}
\usepackage{latexsym,amsfonts,palatino,eulervm,amsthm,graphicx,verbatim, float, mathdots, bm}
\usepackage[margin=2cm]{geometry}
\usepackage{amsmath}
\usepackage{algorithm}
\usepackage[noend]{algpseudocode}
\usepackage{amsmath,amsthm}
\usepackage{amsfonts}
\usepackage{multicol}
\usepackage{multirow,bigdelim}
\usepackage{fancyvrb}
\usepackage[affil-it]{authblk}
\usepackage[hidelinks]{hyperref}
\usepackage{xcolor}

\usepackage{mathtools}

\newcommand\SmallArray[1]{{%
  \tiny \arraycolsep=0.08\arraycolsep\ensuremath{\begin{array}{c}#1\end{array}}}}

\newcommand\Matrix[1]{{%
  \small \arraycolsep=0.8\arraycolsep\ensuremath{\begin{pmatrix}#1\end{pmatrix}}}}

\makeatletter
\def\BState{\State\hskip-\ALG@thistlm}
\makeatother

\algnewcommand\And{\textbf{and} }

\newtheorem{theorem}{Theorem}[section]
\newtheorem{lemma}[theorem]{Lemma}
\newtheorem{corollary}[theorem]{Corollary}
\newtheorem{conjecture}[theorem]{Conjecture}

\theoremstyle{definition}
\newtheorem{definition}[theorem]{Definition}
\newtheorem{example}[theorem]{Example}

\setcounter{MaxMatrixCols}{20}

\title{Weighted projections of alternating sign matrices: Latin-like squares and the ASM polytope}
\date{}
\author{Cian O'Brien \\ \href{mailto:obrien.cian@outlook.com}{obrien.cian@outlook.com}}
\affil{Department of Mathematics \& Computer Studies\\ Mary Immaculate College\\Limerick, Ireland}

\begin{document}
\setlength{\parindent}{0pt}
\setlength{\parskip}{1ex}

\maketitle

\begin{abstract}
The weighted projection of an alternating sign matrix (ASM) was introduced by Brualdi and Dahl (2018) as a step towards characterising a generalisation of Latin squares they introduced using alternating sign hypermatrices. If $z_n = (n,\dots,2,1)$, then the weighted projection of an ASM $A$ is equal to $z_n^TA$. Brualdi and Dahl proved that the weighted projection of an $n \times n$ ASM is majorized by the vector $z_n$, and conjectured that any positive integer vector majorized by $z_n$ is the weighted projection of some ASM. The main result of this paper presents a proof of this conjecture, via monotone triangles. A relaxation of a monotone triangle, called a row-increasing triangle, is introduced. It is shown that for any row-increasing triangle $T$, there exists a monotone triangle $M$ such that each entry of $M$ occurs the same number of times as in $T$. A construction is also outlined for an ASM with given weighted projection. The relationship of the main result to existing results concerning the ASM polytope $ASM_n$ is examined, and a characterisation is given for the relationship between elements of $ASM_n$ corresponding to the same point in the permutohedron of order $n$. Finally, the limitations of the main result for characterising alternating sign hypermatrix Latin-like squares is considered.

\phantom{a}

\noindent \textbf{Mathematics Subject Classification:} 05B15, 05B20, 15B36, 15B48, 15B51, 52B05

\end{abstract}

\section{Introduction}

An \emph{alternating sign matrix (ASM)} is a ${(0,1,-1)}$-matrix for which, in each row and column, the non-zero elements alternate in sign and sum to 1. An immediate consequence of this definition is that ASMs are square matrices with the first and last non-zero entry of each row and column equal to $+1$. Permutation matrices are examples of ASMs, and ASMs arise naturally as the unique minimal lattice extension of the permutation matrices under the \emph{Bruhat order} \cite{lattice}. 

A \emph{Latin square} of order $n$ is an $n \times n$ array containing $n$ symbols such that each symbol occurs exactly once in each row and column. Any $n \times n$ Latin square $L$ with symbols $1, 2, \dots, n$ can be decomposed into a unique sequence $P$ of $n \times n$ mutually orthogonal permutation matrices $P = P_1, P_2, \dots, P_n$ by the following relation.

\[L = L(P) = \sum_{k=1}^n kP_k\]

For example, consider the following Latin square and its decomposition.

\[\Matrix{
    1 & 2 & 3 \\
    2 & 3 & 1 \\
    3 & 1 & 2} 
=
1\Matrix{
      1 & 0 & 0 \\
      0 & 0 & 1 \\
      0 & 1 & 0}
+2\Matrix{
      0 & 1 & 0 \\
      1 & 0 & 0 \\
      0 & 0 & 1}
+3\Matrix{
      0 & 0 & 1 \\
      0 & 1 & 0 \\
      1 & 0 & 0}\]

This interpretation leads very natrually to a generalisation of Latin squares, first introduced by Brualdi and Dahl \cite{brualdidahl}, obtained by replacing the sequence of permutation matrices with planes of an \emph{alternating sign hypermatrix (ASHM)}. Before we define these objects, we first define some features of a hypermatrix.

An $n \times n \times n$ hypermatrix $A = [a_{ijk}]$ has has $n^2$ lines of each of the $3$ following types. Each line has $n$ entries.
\begin{itemize}
\item \emph{Row lines} $A_{*jk} = [a_{ijk} : i = 1, \dots, n]$, for given $1 \leq j,k \leq n$;
\item \emph{Column lines} $A_{i*k} = [a_{ijk} : j = 1, \dots, n]$, for given $1 \leq i,k \leq n$;
\item \emph{Vertical lines} $A_{ij*} = [a_{ijk} : k = 1, \dots, n]$, for given $1 \leq i,j \leq n$.
\end{itemize}

We refer to a \emph{plane} $P_k(A)$ of $A$ to be the horizontal plane $A_{**k} = [a_{ijk}: 1 \leq i,j \leq n ]$, for given $k \in \{1,\dots,n\}$.

An \emph{alternating sign hypermatrix (ASHM)} is a $(0,1,-1)$-hypermatrix for which the non-zero entries in each row, column, and vertical line of the hypermatrix alternate in sign, starting and ending with $+1$.

For example, the following is a $3 \times 3 \times 3$ ASHM.

\[A = \Matrix{
      0 & 0 & 1 \\
      0 & 1 & 0 \\
      1 & 0 & 0}
\hspace{-0.1cm}\nearrow\hspace{-0.1cm}\Matrix{
      0 & 1 & 0 \\
      1 & -1 & 1 \\
      0 & 1 & 0}
\hspace{-0.1cm}\nearrow\hspace{-0.1cm}\Matrix{
      1 & 0 & 0 \\
      0 & 1 & 0 \\
      0 & 0 & 1}\]

The north-east arrow $P_k(A) \nearrow P_{k+1}(A)$ is used to denote that $P_k(A)$ is below $P_{k+1}(A)$. For simplicity, for the remainder of this paper, we omit the zero entries of all ASHMs, and represent all non-zero entries with $+$ or $-$.

An \emph{alternating sign hypermatrix Latin-like square (ASHL)} is an $n \times n$ array $L$ constructed from an $n \times n \times n$ ASHM $A$ by 
\[L = L(A) = \sum_{k=1}^n kP_k(A)\text{.}\]

From the previous example, we then have the following ASHL.
\[L(A) = 1\Matrix{
       &  & + \\
       & + &  \\
      + &  & }
+2\Matrix{
       & + &  \\
      + & - & + \\
       & + & }
+3\Matrix{
      + &  &  \\
       & + &  \\
       &  & +}
=\Matrix{
    3 & 2 & 1 \\
    2 & 2 & 2 \\
    1 & 2 & 3}\]

For compactness, we introduce an alternative notation, representing an $n \times n \times n$ ASHM by an $n \times n$ grid in which each entry of the grid contains the weighted sum for the corresponding vertical line of the ASHL (without adding together the terms of the sum). Using this notation, we represent the ASHM from above in the following way.

\[A = \begin{array}{|c|c|c|}
\hline
    3 & 2 & 1 \\
\hline
    2 & \SmallArray{1-2\\+3} & 2 \\
\hline
    1 & 2 & 3 \\
\hline
\end{array}
\hspace{2cm}
L(A) = \begin{array}{|c|c|c|}
\hline
    3 & 2 & 1 \\
\hline
    2 & 2 & 2 \\
\hline
    1 & 2 & 3 \\
\hline
\end{array}\]

It can be easily read that this is an ASHM using this notation, in the following way.
\begin{itemize}
\item The signs of the terms in each entry must alternate in sign, beginning and ending with $+$, when ordered in increasing order.
\[\text{For example, the middle entry from $A$ above is }+1-2+3\]
\item For each integer $1,\dots,n$, the positions of the signs corresponding to that integer must form an ASM.
\[\text{From the previous example, we have the following for integers 1, 2, and 3.}\]
\[\begin{array}{|c|c|c|}
\hline
     &  & +1 \\
\hline
     & +1 &  \\
\hline
    +1 &  &  \\
\hline
\end{array}
\hspace{1cm}
\begin{array}{|c|c|c|}
\hline
     & +2 &  \\
\hline
    +2 & -2 & +2 \\
\hline
     & +2 &  \\
\hline
\end{array}
\hspace{1cm}
\begin{array}{|c|c|c|}
\hline
    +3 &  &  \\
\hline
     & +3 &  \\
\hline
     &  & +3 \\
\hline
\end{array}\]
\end{itemize}

As well as introducing ASHMs and ASHLs, Brualdi and Dahl \cite{brualdidahl} posed a number of interesting problems about these objects. Some of these questions are addressed in \cite{ASHM-decomp}, firstly by characterising how pairs of ASHMs with the same corresponding ASHL relate to one another, and secondly by exploring the maximum number of times a particular integer may occur as an entry of an $n \times n$ ASHL.

This paper addresses a further question posed in \cite{brualdidahl}, which are motivated by the problem of characterising ASHLs without needing to find an underlying ASHM. Brualdi and Dahl \cite{brualdidahl} introduce the \emph{weighted projection} of an ASM as an intermediate step towards characterising ASHLs. They prove that the weighted (vertical or horizontal) projection of an $n \times n$ ASM is majorized by the vector $(n,\dots,2,1)$, and they conjecture that any vector majorized by this is the weighted projection of some $n \times n$ ASM. In the next section, we prove this conjecture using monotone traingles, which are known to be in bijection with ASMs, and outline an explicit construction for an ASM with given weighted projection.

The third section examines the relationship between our main result and existing results concerning the ASM polytope $ASM_n$. A characterisation of elements of $ASM_n$ with the same weighted projection is given. The final section of this paper returns to the question of characterising ASHLs, discussing the limitations of the main result and describing what is known to date.


\section{Weighted projections of alternating sign matrices}
Brualdi and Dahl \cite{brualdidahl} introduce the weighted vertical and horizontal projections of an ASM. Since any action of the dihedral group $D_4$ on an ASM results in another ASM, these concepts are equivalent. We therefore only consider the weighted vertical projection of an ASM, hereby referred to as \emph{the} weighted projection.

\begin{definition}Let $A$ be an $n \times n$ ASM. The \emph{weighted projection} of $A$ is the vector $v(A) = z_n^TA$, for $z_n = (n, n-1, \dots, 3, 2, 1)$.\end{definition}

For example, the following ASM
\[A = \begin{pmatrix}
&+&&\\
+&-&+&\\
&+&&\\
&&&+
\end{pmatrix}\]
has weighted projection $v(A) = (4,3,2,1)^TA = (3,3,3,1)$.

The weighted projection of an ASM is a 2-dimensional analogue of the operation $L(A)$, which results in an ASHL from an ASHM $A$. Therefore any row or column of an ASHL is the weighted projection of some ASM. We can therefore use a characterisation of weighted projections of ASMs as a step towards characterising ASHLs.

\begin{definition}A monotone triangle of order $n$ is a triangle with $n$ rows and entries from $\{1,2,\dots,n\}$ such that the entries in each row are strictly increasing, and the entries in each north-east and south-east line are weakly increasing.\end{definition}

There exists a bijection between monotone triangles and ASMs of order $n$ \cite{monotone bib}. To obtain a monotone triangle from an ASM $A$, first produce the matrix for which every entry is the sum of entries in the corresponding column of $A$, above and including the entry itself. The position of $+$ entries in each row of the resulting matrix indicate the entries of the monotone triangle.

\[A = \begin{pmatrix}
&+&&\\
+&-&+&\\
&+&&\\
&&&+
\end{pmatrix}
\hspace{0.3cm}\rightarrow\hspace{0.3cm}
\begin{pmatrix}
&+&&\\
+&&+&\\
+&+&+&\\
+&+&+&+
\end{pmatrix}
\hspace{0.3cm}\rightarrow\hspace{0.3cm}
\begin{array}{ccccccc}
&\hspace{-0.2cm}&\hspace{-0.2cm}&\hspace{-0.2cm}2&\hspace{-0.2cm}&\hspace{-0.2cm}&\hspace{-0.2cm}\\
&\hspace{-0.2cm}&\hspace{-0.2cm}1&\hspace{-0.2cm}&\hspace{-0.2cm}3&\hspace{-0.2cm}&\hspace{-0.2cm}\\
&\hspace{-0.2cm}1&\hspace{-0.2cm}&\hspace{-0.2cm}2&\hspace{-0.2cm}&\hspace{-0.2cm}3&\hspace{-0.2cm}\\
1&\hspace{-0.2cm}&\hspace{-0.2cm}2&\hspace{-0.2cm}&\hspace{-0.2cm}3&\hspace{-0.2cm}&\hspace{-0.2cm}4
\end{array}\]

We call this intermediate matrix the \emph{partial sum matrix} $p(A)$, and note that $p(A)$ is always a $(0,1)$-matrix for ASM $A$, since any $-1$ entry of $A$ has a $+1$ above it as the previous non-zero entry.

It follows readily from these definitions that the $j$th entry of $v(A)$ is equal to the number of $+$ entries in column $j$ of $p(A)$, and hence the number of occurrences of $j$ as an entry in the monotone triangle corresponding to $A$. Recall, in the example above, that $v(A) = (3,3,3,1)$ and note that 1, 2, and 3 occur three times each in the corresponding monotone triangle, while 4 occurs once.

These results about monotone triangles and partial sum matrices provide the necessary tools to prove Brualdi and Dahl's conjecture about weighted projections of ASMs and \emph{majorization}.

\begin{definition} A vector $x = (x_1, x_2, \dots, x_n)$ is \emph{majorized} by a vector $y = (y_1, y_2, \dots, y_n)$, denoted $x \preceq y$, if for all $k = 1, 2, \dots, n$,
\[\sum_i^k x^{\downarrow}_i \leq \sum_i^k y^{\downarrow}_i,\]
where $x^{\downarrow}_i$ is the $i^{\text{th}}$ highest valued entry of $x$ (and similar for $y$), with equality for $k = n$.\end{definition}

The following is a conjecture of Brualdi and Dahl \cite{brualdidahl}.
\begin{conjecture}\label{proj_conj}
Let $v$ be a vector of positive integers such that $v \preceq (n,n-1,\dots,2,1)$. Then there exists an $n\times n$ ASM whose weighted projection is $v$.
\end{conjecture}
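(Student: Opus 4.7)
The plan is to translate the problem to monotone triangles via the bijection recalled in the excerpt. Since the $j$-th coordinate of $v(A)$ equals the number of times $j$ appears in the monotone triangle $M(A)$ associated with $A$, the conjecture is equivalent to the following: for every positive integer vector $v \preceq (n, n-1, \ldots, 1)$, exhibit a monotone triangle of order $n$ whose multiset of entries is $\{1^{v_1}, 2^{v_2}, \ldots, n^{v_n}\}$. Note that $\sum_j v_j = \binom{n+1}{2}$ is exactly the number of cells of such a triangle, and the positivity assumption $v_j \geq 1$ is compatible with the fact that the bottom row of a monotone triangle of order $n$ is forced to be $(1, 2, \ldots, n)$.

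My approach would decouple the two defining conditions of a monotone triangle --- strict row increase, and the N-E/S-E sandwich inequalities $T[k{+}1][i] \leq T[k][i] \leq T[k{+}1][i{+}1]$. First, I would construct a \emph{row-increasing} triangle $T$ (satisfying only strict row increase) realizing the target multiset. Regarding row $k$ as a $k$-subset of $\{1,\ldots,n\}$, this amounts to building a $(0,1)$-matrix with row sums $(1,2,\ldots,n)$ and column sums $v$; I would verify feasibility by a greedy, Gale--Ryser style argument, filling rows from largest to smallest with the values of highest residual multiplicity and using $v \preceq (n,n-1,\ldots,1)$ to show that the residual column vector remains majorized by the residual row bound at each step.

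Next I would invoke the key lemma announced in the abstract --- every row-increasing triangle can be rearranged into a monotone triangle with the same multiset --- to convert $T$ into a monotone triangle $M$ with multiset $v$. I would prove this lemma by a local exchange argument: swap entries between adjacent rows at a violation of the sandwich inequality in a way that preserves the multiset and strict row increase while strictly reducing the defect
\[
\Phi(T) \;=\; \sum_{k,i}\Bigl(\max\bigl(0,\,T[k{+}1][i]-T[k][i]\bigr) + \max\bigl(0,\,T[k][i]-T[k{+}1][i{+}1]\bigr)\Bigr).
\]
Termination of this rewriting gives $M$, and inverting the ASM-to-monotone-triangle bijection (differentiating the partial sums, as in the excerpt) then produces an explicit ASM with weighted projection $v$. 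The main obstacle, I expect, is designing the swap so that it is always available at a violation, always strictly reduces $\Phi$, and never creates fresh violations elsewhere; processing violations in a fixed geometric order (say top-to-bottom, leftmost first) and restricting exchanges to a small window around the violation should make this controllable, though getting the bookkeeping exactly right is the subtle technical core of the argument.
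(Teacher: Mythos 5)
Your overall architecture is exactly that of the paper: reduce via the ASM--monotone-triangle bijection, build a row-increasing triangle realising the multiset (the paper does this by applying Gale--Ryser to the self-conjugate vector $(n,n-1,\dots,2,1)$ to get a $(0,1)$-matrix with row sums $(1,2,\dots,n)$ and column sums $v$, which is your greedy feasibility step), and then rearrange into a monotone triangle by local exchanges. The Gale--Ryser half of your plan is sound and matches the paper's Lemma 2.6. The gap is in the exchange lemma, and it is precisely the point you flag as the ``subtle technical core'': your plan as stated would fail in two concrete ways. First, a single multiset-preserving swap at a violation is \emph{not} always available while preserving strict row increase. Consider the configuration in which an upper row $1,\,2$ sits over a lower row $2,\,3,\,4$, giving two overlapping inversions $(2,1)$ and $(3,2)$; swapping either one alone puts a repeated entry ($2,2$ or $3,3$) into a row. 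The paper's resolution is structural: it groups overlapping order-$2$ inverted sub-triangles at the same height into \emph{inverted trapezoids}, shows each entry of a trapezoid lies in at most one inversion (using row-increase to exclude a downward inversion left of an upward one), and switches \emph{all} inversions of a trapezoid simultaneously, which it verifies preserves the row-increasing condition. Your ``small window around the violation'' gestures at this but does not supply the grouping or the case analysis that makes it work.

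Second, your potential $\Phi$, the sum of local sandwich-violation magnitudes between adjacent rows, is the wrong termination measure: a swap between rows $k$ and $k+1$ alters the sandwich inequalities against rows $k-1$ and $k+2$, so a switch can create fresh violations and $\Phi$ need not strictly decrease; a fixed processing order does not obviously repair this. The paper instead uses a global statistic: $f(T)$ counts the entries of $T$ that occur \emph{before} (reachable by a sequence of north-east/south-east steps) some strictly smaller entry. Swapping an inversion $(y,x)$ with $x<y$ can only shrink the set of such bad entries --- anything occurring before $x$ or before $y$ afterwards occurred before $x$ beforehand --- and removes $y$ itself from the count, so $f$ strictly decreases under every trapezoid switch, and $f(T)=0$ is shown to imply $T$ is monotone. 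So your reduction and feasibility steps coincide with the paper's, but to close the argument you need both the trapezoid-switching move (to keep row-increase) and a global, provably decreasing potential such as $f$ in place of $\Phi$.
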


We prove this conjecture using the following two lemmas and a relaxation of a monotone triangle called a \emph{row-increasing triangle}, which does not have the requirement that entries are weakly increasing along north-east and south-east lines. We note that any $n \times n$ $(0,1)$-matrix with row-sum vector $(1,2,\dots,n)$ corresponds to a unique row-increasing triangle, since the row-sum condition guarantees exactly $k$ entries in row $k$ of the triangle.

For example, 

\[\begin{pmatrix}
+&&&\\
&+&+&\\
+&+&+&\\
+&+&+&+
\end{pmatrix}
\hspace{0.3cm}\rightarrow\hspace{0.3cm}
\begin{array}{ccccccc}
&\hspace{-0.2cm}&\hspace{-0.2cm}&\hspace{-0.2cm}1&\hspace{-0.2cm}&\hspace{-0.2cm}&\hspace{-0.2cm}\\
&\hspace{-0.2cm}&\hspace{-0.2cm}2&\hspace{-0.2cm}&\hspace{-0.2cm}3&\hspace{-0.2cm}&\hspace{-0.2cm}\\
&\hspace{-0.2cm}1&\hspace{-0.2cm}&\hspace{-0.2cm}2&\hspace{-0.2cm}&\hspace{-0.2cm}3&\hspace{-0.2cm}\\
1&\hspace{-0.2cm}&\hspace{-0.2cm}2&\hspace{-0.2cm}&\hspace{-0.2cm}3&\hspace{-0.2cm}&\hspace{-0.2cm}4
\end{array}
\hspace{2cm}\begin{pmatrix}
&&+&\\
+&+&&\\
&+&+&+\\
+&+&+&+
\end{pmatrix}
\hspace{0.3cm}\rightarrow\hspace{0.3cm}
\begin{array}{ccccccc}
&\hspace{-0.2cm}&\hspace{-0.2cm}&\hspace{-0.2cm}3&\hspace{-0.2cm}&\hspace{-0.2cm}&\hspace{-0.2cm}\\
&\hspace{-0.2cm}&\hspace{-0.2cm}1&\hspace{-0.2cm}&\hspace{-0.2cm}2&\hspace{-0.2cm}&\hspace{-0.2cm}\\
&\hspace{-0.2cm}2&\hspace{-0.2cm}&\hspace{-0.2cm}3&\hspace{-0.2cm}&\hspace{-0.2cm}4&\hspace{-0.2cm}\\
1&\hspace{-0.2cm}&\hspace{-0.2cm}2&\hspace{-0.2cm}&\hspace{-0.2cm}3&\hspace{-0.2cm}&\hspace{-0.2cm}4
\end{array}\]

These examples do not correspond to ASMs, however, since they are not monotone triangles.

\begin{lemma}\label{row-increasing}Let $v = (v_1,\dots,v_n)$ be a vector of positive integers such that $v \preceq (n,n-1,\dots,2,1)$. Then there exists a row-increasing triangle $T$ of order $n$ such that $1,\dots,n$ occur as entries of $T$ $v_1,\dots,v_n$ times, respectively.\end{lemma}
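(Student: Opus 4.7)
The plan is to translate the question into a $(0,1)$-matrix realization problem — using the correspondence already given in the excerpt — and then invoke the Gale--Ryser theorem, whose hypothesis will match the given majorization condition on the nose.

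For the translation, a row-increasing triangle $T$ of order $n$ is encoded by an $n\times n$ $(0,1)$-matrix $M$ whose $k$th row has exactly $k$ ones, the column positions of those ones being the $k$ strictly increasing values in row $k$ of $T$. Under this encoding, the number of occurrences of value $j$ in $T$ equals the $j$th column sum of $M$. So the lemma is equivalent to the statement: for every positive-integer vector $v\preceq(n,n-1,\dots,1)$, there exists an $n\times n$ $(0,1)$-matrix with row-sum vector $(1,2,\dots,n)$ and column-sum vector $v$.

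To apply Gale--Ryser, I sort the row sums in decreasing order into the staircase partition $\lambda=(n,n-1,\dots,1)$. A direct check shows that $\lambda$ is self-conjugate, $\lambda^{*}=\lambda$. The Gale--Ryser theorem then asserts that a $(0,1)$-matrix with these row sums and prescribed column sums $v$ exists if and only if $v\preceq\lambda^{*}=(n,n-1,\dots,1)$, which is precisely our hypothesis.

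In keeping with the constructive flavour of the paper, I would in fact replace the Gale--Ryser appeal by an explicit Havel--Hakimi-style construction: fix row $n$ of $M$ to be all ones (this is forced since row $n$ has $n$ strictly increasing entries from $\{1,\dots,n\}$) and decrement each $v_j$ by $1$; then process rows $k=n-1,n-2,\dots,1$ in turn, at each step placing the $k$ ones of row $k$ in the $k$ columns with largest remaining column-demand. The main obstacle is the invariant to be preserved under this greedy step: after each row is filled, the residual column-demand vector is still majorized by the staircase $(k-1,k-2,\dots,1,0,\dots,0)$ corresponding to the unprocessed rows. This follows from two routine facts — that subtracting $1$ from every entry of a majorized pair preserves majorization, and that filling the top-$k$ demand columns after decrementing them by $1$ also preserves it — the latter being the content of a bipartite Havel--Hakimi step, provable by a short Robin Hood transfer argument on the residual vector. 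Non-negativity of the column demands throughout is automatic from $v_j\ge 1$ together with this preserved majorization.
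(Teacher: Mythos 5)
Your proposal is correct and follows essentially the same route as the paper: encode the row-increasing triangle as a $(0,1)$-matrix with row sums $(1,2,\dots,n)$ and column sums $v$, observe that the staircase $(n,n-1,\dots,1)$ is self-conjugate, and invoke the Gale--Ryser theorem (the paper obtains rows in increasing order by reflecting the matrix rather than by sorting, an immaterial difference). Your appended Havel--Hakimi-style greedy construction is an optional extra beyond what the paper does --- the Gale--Ryser appeal already completes the proof, and the paper likewise defers explicit construction to Krause's proof of Gale--Ryser --- so its only partially verified invariant does not affect the correctness of your argument.
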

\begin{proof}
The conjugate $x^*$ of a vector $x$ is the vector whose $k$th entry is equal to the number of entries of $x$ that are greater than or equal to $k$, for $k \in \{1,2,\dots,n\}$. Since $v \preceq (n,n-1,\dots,2,1)$, the Gale-Ryser theorem \cite{gale, ryser} implies that there exists a $(0,1)$-matrix $M$ with row-sum $(n,n-1,\dots,2,1)^*$ and column-sum $v$. Let $M'$ be the matrix resulting from reflecting $M$ about the horizontal axis.

Since $(n,n-1,\dots,2,1)^* = (n,n-1,\dots,2,1)$, this means that $M'$ has row-sum $(1,2,\dots,n)$ and therefore corresponds to a row-increasing triangle $T$. Each entry of $T$ equal to $k \in \{1,2,\dots,n\}$ corresponds to a $+$ entry in column $k$ of $M'$. Therefore, since $v$ is the column-sum of $M'$, the $k$th entry of $v$ is the number of occurrences of $k$ as an entry of $T$.
\end{proof}

We now prove that such a row-increasing triangle can be transformed into a monotone triangle with each entry occurring the same number of times by swapping positions of certain entries.

\begin{lemma}\label{monotone}Let $T$ be a row-increasing triangle of order $n$. Then there exists a monotone triangle of order $n$ with each entry occurring the same number of times as in $T$.\end{lemma}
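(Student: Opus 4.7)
My approach is to construct the desired monotone triangle from $T$ by applying two diagonal-sorting operations. For $d\in\{0,1,\dots,n-1\}$, call the set of positions $\{(k,i):k-i=d\}=\{(d+1,1),(d+2,2),\dots,(n,n-d)\}$ the \emph{$d$-th SE diagonal} $D_d$; similarly, for $i\in\{1,\dots,n\}$, call $\{(k,i):k\ge i\}$ the \emph{$i$-th NE diagonal}. Define the \emph{SE sort} of $T$ to be the triangle obtained by leaving the multiset of entries on each $D_d$ unchanged but rearranging them in weakly increasing order from top to bottom, and the \emph{NE sort} analogously, with each NE diagonal arranged weakly increasing from bottom to top. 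Both operations preserve the entry multiset of $T$.

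The key technical ingredient is that SE sort preserves the row-increasing property (the NE case being symmetric). Write $m^{(d)}_s$ for the $s$-th smallest (0-indexed) element of $D_d$ in $T$, so the entry of the SE-sorted triangle $T'$ at position $(k,i)$ is $m^{(k-i)}_{i-1}$. Row-increasing at row $k$ of $T'$ reduces to the family of inequalities $m^{(d)}_s<m^{(d-1)}_{s+1}$ for $d\ge 1$ and $0\le s\le n-d-1$. To prove these, the row-increasing property of $T$ supplies an injection $f\colon D_d\to D_{d-1}$ sending the entry of $D_d$ in row $k$ to the entry of $D_{d-1}$ in the same row, and satisfying $f(x)>x$ for every $x\in D_d$. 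If $m^{(d)}_s\ge m^{(d-1)}_{s+1}$, then the images of the $n-d-s-1$ largest elements of $D_d$ under $f$ already exhaust $\{x\in D_{d-1}:x>m^{(d-1)}_{s+1}\}$ by a size count, forcing $f(m^{(d)}_s)\le m^{(d-1)}_{s+1}\le m^{(d)}_s$ and contradicting $f(x)>x$.

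With SE sort shown to preserve the row-increasing property, I would apply SE sort to $T$ and then NE sort to the result, eliminating first the interlacing obstructions of the form $T(k,i)>T(k+1,i+1)$ and then those of the form $T(k+1,i)>T(k,i)$. The \emph{main obstacle} I anticipate is verifying that the second sort does not destroy the weakly increasing SE diagonals produced by the first: i.e., after SE-then-NE sorting, both families of diagonals are simultaneously weakly increasing, so that the result is a monotone triangle with the same multiset as $T$. I expect this to follow from a symmetric application of the inequality in the previous paragraph to the NE diagonals of the SE-sorted triangle, whose row-increasing property furnishes the analogous injection; if a single pass does not suffice, one iterates the two sorts and argues convergence via a monotone invariant on the finite set of row-increasing triangles with fixed multiset.
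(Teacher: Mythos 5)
Your key lemma---that the SE sort of a row-increasing triangle is again row-increasing---is correct, and your injection-plus-counting proof of $m^{(d)}_s<m^{(d-1)}_{s+1}$ is sound: the $n-d-s-1$ largest elements of $D_d$ have distinct images under $f$ strictly exceeding $m^{(d-1)}_{s+1}$, exhausting the at most $n-d-s-1$ such elements of $D_{d-1}$ and forcing the contradiction $f(m^{(d)}_s)\le m^{(d)}_s$. This is also a genuinely different strategy from the paper, which never sorts globally: it locates local inverted trapezoids, switches them, and uses the potential $f(T)$ (the number of entries occurring before a smaller entry) to force termination. However, your proposal has a genuine gap exactly where you flagged it, and the repair you sketch does not supply what is needed. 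The ``symmetric application'' of your lemma (via the reflect-and-complement symmetry $x\mapsto n+1-x$, which exchanges the two diagonal directions) proves only that the NE sort preserves the \emph{row-increasing} property; it says nothing about whether the NE sort preserves the weakly increasing SE diagonals produced by the first pass, which is the statement your argument actually requires. That statement is not an instance of your lemma under any symmetry, nor is it the classical fact that sorting the rows of a column-sorted matrix preserves sorted columns: that fact fails on triangular shapes. Concretely, the triangle with rows $(1)$, $(0,1)$, $(9,4,5)$ has all SE diagonals weakly increasing, yet NE-sorting it gives rows $(9)$, $(1,4)$, $(0,1,5)$, whose SE diagonal $(9,4,5)$ is no longer sorted. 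Of course this triangle is not row-increasing, which shows that the preservation you need must use the row-increasing hypothesis in an essential way, through an argument combining the weak SE-diagonal inequalities $T(k,i)\le T(k+1,i+1)$ with the strict row inequalities $T(k,i)<T(k,i+1)$---and no such argument appears in your text.

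Your fallback---iterate the two sorts and ``argue convergence via a monotone invariant''---is likewise not an argument until an invariant is exhibited, and the natural candidates fail: the SE sort pushes large entries downward while the NE sort pushes them upward, so any simple weighted sum is moved in opposite directions by the two passes; without a termination measure you cannot even rule out cycling. (The paper's proof succeeds precisely because it has such a measure: switching an inverted trapezoid strictly decreases $f(T)$ while preserving row-increasingness.) The good news is that your missing step is true and provable in your own style: if $T$ is row-increasing with sorted SE diagonals, one can show that the $(c+1)$-th largest entry of the NE diagonal through column $i$ is at most the $(c+1)$-th largest entry of the (shorter) NE diagonal through column $i+1$, by injecting the $c+1$ cells carrying the largest values of the longer diagonal into the shorter one---cells forming an initial prefix from the top go across via the weak SE-diagonal inequalities, and the remaining cells go via the strict row inequalities, a size count resolving the collisions. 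With that lemma, a single SE-then-NE pass yields a triangle that is simultaneously row-increasing, NE-sorted, and still SE-sorted, hence monotone with the same entry multiset. As written, though, the proposal leaves its decisive step unproven, so it is not yet a complete proof.
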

\begin{proof}
We refer to a pair of entries $(b,a)$ in $T$ as an \emph{inversion} if $b>a$ and $a$ occurs directly north-east or south-east of $b$ (in the same north/south-east line). We call $(b,a)$ an \emph{upward} inversion if $a$ is north-east of $b$, and we call it a \emph{downward} inversion if $a$ is south-east of $b$. For example, inversions (upward and downward, respectively) are indicated in red in the following row-increasing triangles.

\[\begin{array}{ccccc}
&\hspace{-0.2cm}&\hspace{-0.2cm}{\color{red}1}&\hspace{-0.2cm}&\hspace{-0.2cm}\\
&\hspace{-0.2cm}{\color{red}2}&\hspace{-0.2cm}&\hspace{-0.2cm}3&\hspace{-0.2cm}\\
1&\hspace{-0.2cm}&\hspace{-0.2cm}2&\hspace{-0.2cm}&\hspace{-0.2cm}3
\end{array}
\hspace{2cm}
\begin{array}{ccccc}
&\hspace{-0.2cm}&\hspace{-0.2cm}{\color{red}3}&\hspace{-0.2cm}&\hspace{-0.2cm}\\
&\hspace{-0.2cm}1&\hspace{-0.2cm}&\hspace{-0.2cm}{\color{red}2}&\hspace{-0.2cm}\\
1&\hspace{-0.2cm}&\hspace{-0.2cm}2&\hspace{-0.2cm}&\hspace{-0.2cm}3
\end{array}\]

An entry $x$ in a triangle $T$ occurs \emph{before} another entry $y$ if there is a sequence of north-east and south-east steps from $x$ to $y$. Let $f(T)$ count the number of entries in $T$ occurring before an entry of less value. For example, in the following row-increasing triangles, all entries occurring before some entry of less value are indicated in red. The entries of less value are indicated in blue. Therefore $f(T)$, for each of these triangles respectively, is 8 and 6.

\[\begin{array}{ccccccccccc}
   &\hspace{-0.2cm}   &\hspace{-0.2cm}   &\hspace{-0.2cm}   &\hspace{-0.2cm}   &\hspace{-0.2cm}{\color{red}2}&\hspace{-0.2cm}   &\hspace{-0.2cm}   &\hspace{-0.2cm}   &\hspace{-0.2cm}   &\hspace{-0.2cm}   \\
   &\hspace{-0.2cm}   &\hspace{-0.2cm}   &\hspace{-0.2cm}   &\hspace{-0.2cm}{\color{red}2}&\hspace{-0.2cm}   &\hspace{-0.2cm}{\color{blue}1}&\hspace{-0.2cm}   &\hspace{-0.2cm}   &\hspace{-0.2cm}   &\hspace{-0.2cm}   \\
   &\hspace{-0.2cm}   &\hspace{-0.2cm}   &\hspace{-0.2cm}{\color{red}2}&\hspace{-0.2cm}   &\hspace{-0.2cm}{\color{red}3}&\hspace{-0.2cm}   &\hspace{-0.2cm}{5}&\hspace{-0.2cm}   &\hspace{-0.2cm}   &\hspace{-0.2cm}   \\
   &\hspace{-0.2cm}   &\hspace{-0.2cm}{\color{red}2}&\hspace{-0.2cm}   &\hspace{-0.2cm}{\color{red}3}&\hspace{-0.2cm}   &\hspace{-0.2cm}{4}&\hspace{-0.2cm}   &\hspace{-0.2cm}{5}&\hspace{-0.2cm}   &\hspace{-0.2cm}   \\
   &\hspace{-0.2cm}{1}&\hspace{-0.2cm}   &\hspace{-0.2cm}{\color{red}3}&\hspace{-0.2cm}   &\hspace{-0.2cm}{4}&\hspace{-0.2cm}   &\hspace{-0.2cm}{5}&\hspace{-0.2cm}   &\hspace{-0.2cm}{6}&\hspace{-0.2cm}   \\
{1}&\hspace{-0.2cm}   &\hspace{-0.2cm}{\color{red}2}&\hspace{-0.2cm}   &\hspace{-0.2cm}{3}&\hspace{-0.2cm}   &\hspace{-0.2cm}{4}&\hspace{-0.2cm}   &\hspace{-0.2cm}{5}&\hspace{-0.2cm}   &\hspace{-0.2cm}{6}\\
\end{array}
\hspace{2cm}
\begin{array}{ccccccccccc}
   &\hspace{-0.2cm}   &\hspace{-0.2cm}   &\hspace{-0.2cm}   &\hspace{-0.2cm}   &\hspace{-0.2cm}{\color{blue}1}&\hspace{-0.2cm}   &\hspace{-0.2cm}   &\hspace{-0.2cm}   &\hspace{-0.2cm}   &\hspace{-0.2cm}   \\
   &\hspace{-0.2cm}   &\hspace{-0.2cm}   &\hspace{-0.2cm}   &\hspace{-0.2cm}{\color{red}2}&\hspace{-0.2cm}   &\hspace{-0.2cm}{\color{blue}2}&\hspace{-0.2cm}   &\hspace{-0.2cm}   &\hspace{-0.2cm}   &\hspace{-0.2cm}   \\
   &\hspace{-0.2cm}   &\hspace{-0.2cm}   &\hspace{-0.2cm}{\color{red}2}&\hspace{-0.2cm}   &\hspace{-0.2cm}{\color{red}3}&\hspace{-0.2cm}   &\hspace{-0.2cm}{5}&\hspace{-0.2cm}   &\hspace{-0.2cm}   &\hspace{-0.2cm}   \\
   &\hspace{-0.2cm}   &\hspace{-0.2cm}{\color{red}2}&\hspace{-0.2cm}   &\hspace{-0.2cm}{\color{red}3}&\hspace{-0.2cm}   &\hspace{-0.2cm}{4}&\hspace{-0.2cm}   &\hspace{-0.2cm}{5}&\hspace{-0.2cm}   &\hspace{-0.2cm}   \\
   &\hspace{-0.2cm}{1}&\hspace{-0.2cm}   &\hspace{-0.2cm}{\color{red}3}&\hspace{-0.2cm}   &\hspace{-0.2cm}{4}&\hspace{-0.2cm}   &\hspace{-0.2cm}{5}&\hspace{-0.2cm}   &\hspace{-0.2cm}{6}&\hspace{-0.2cm}   \\
{1}&\hspace{-0.2cm}   &\hspace{-0.2cm}{2}&\hspace{-0.2cm}   &\hspace{-0.2cm}{3}&\hspace{-0.2cm}   &\hspace{-0.2cm}{4}&\hspace{-0.2cm}   &\hspace{-0.2cm}{5}&\hspace{-0.2cm}   &\hspace{-0.2cm}{6}\\
\end{array}\]

If $f(T) > 0$, then there exist entries $a_1 > a_k$ with $a_1$ occurring before $a_k$. The corresponding sequence of north/south-east steps $a_1, a_2, \dots, a_k$ has some $a_{i+1} > a_{i}$. If this was not the case, then $a_1 \leq a_2 \leq \dots \leq a_k$, and so $a_1 \leq a_k$, which contradicts that $a_1 > a_k$. Therefore if $f(T) > 0$, then $T$ contains an inversion. Note also that if $f(T) = 0$, then $T$ has no inversions and so $T$ is a monotone triangle.

We refer to each sub-triangle of order 2 in $T$ containing an inversion as an \emph{inverted triangle}. Since $T$ is row-increasing, each inverted triangle contains exactly one inversion. To see this, consider all possible configurations of an order 2 sub-triangle containing entries $1 \leq 2 \leq 3$.
\[\begin{array}{ccccccc}
 &\hspace{-0.2cm}2&\hspace{-0.2cm}&\\
1&\hspace{-0.2cm}&\hspace{-0.2cm}3
\end{array}
\hspace{0.5cm}
\begin{array}{ccccccc}
 &\hspace{-0.2cm}1&\hspace{-0.2cm}&\\
2&\hspace{-0.2cm}&\hspace{-0.2cm}3
\end{array}
\hspace{0.5cm}
\begin{array}{ccccccc}
 &\hspace{-0.2cm}3&\hspace{-0.2cm}&\\
1&\hspace{-0.2cm}&\hspace{-0.2cm}2
\end{array}
\hspace{0.5cm}
\begin{array}{ccccccc}
 &\hspace{-0.2cm}2&\hspace{-0.2cm}&\\
3&\hspace{-0.2cm}&\hspace{-0.2cm}1
\end{array}
\hspace{0.5cm}
\begin{array}{ccccccc}
 &\hspace{-0.2cm}1&\hspace{-0.2cm}&\\
3&\hspace{-0.2cm}&\hspace{-0.2cm}2
\end{array}
\hspace{0.5cm}
\begin{array}{ccccccc}
 &\hspace{-0.2cm}3&\hspace{-0.2cm}&\\
2&\hspace{-0.2cm}&\hspace{-0.2cm}1
\end{array}\]

The first three are the only sub-triangles that satisfy the row-increasing condition. The first has no inversions, and therefore is not an inverted triangle. The second and third have one inversion each; the upward inversion $(2,1)$ and the downward inversion $(3,2)$, respectively.

We call the number of rows in $T$ below an inverted triangle the \emph{height} of the inverted triangle. Some inverted triangles may have entries in common with others. We define the \emph{inverted trapezoids} of $T$ to be the union of overlapping inverted triangles of the same height. For example, the following row-increasing triangle has two inverted trapezoids, one of height 2 and one of height 1, indicated below in red.

\[\begin{array}{ccccccc}
&\hspace{-0.2cm}&\hspace{-0.2cm}&\hspace{-0.2cm}3&\hspace{-0.2cm}&\hspace{-0.2cm}&\hspace{-0.2cm}\\
&\hspace{-0.2cm}&\hspace{-0.2cm}1&\hspace{-0.2cm}&\hspace{-0.2cm}2&\hspace{-0.2cm}&\hspace{-0.2cm}\\
&\hspace{-0.2cm}2&\hspace{-0.2cm}&\hspace{-0.2cm}3&\hspace{-0.2cm}&\hspace{-0.2cm}4&\hspace{-0.2cm}\\
1&\hspace{-0.2cm}&\hspace{-0.2cm}2&\hspace{-0.2cm}&\hspace{-0.2cm}3&\hspace{-0.2cm}&\hspace{-0.2cm}4
\end{array}
:\hspace{2cm}
\begin{array}{ccccccc}
&\hspace{-0.2cm}&\hspace{-0.2cm}&\hspace{-0.2cm}{\color{red}3}&\hspace{-0.2cm}&\hspace{-0.2cm}&\hspace{-0.2cm}\\
&\hspace{-0.2cm}&\hspace{-0.2cm}{\color{red}1}&\hspace{-0.2cm}&\hspace{-0.2cm}{\color{red}2}&\hspace{-0.2cm}&\hspace{-0.2cm}\\
&\hspace{-0.2cm}2&\hspace{-0.2cm}&\hspace{-0.2cm}3&\hspace{-0.2cm}&\hspace{-0.2cm}4&\hspace{-0.2cm}\\
1&\hspace{-0.2cm}&\hspace{-0.2cm}2&\hspace{-0.2cm}&\hspace{-0.2cm}3&\hspace{-0.2cm}&\hspace{-0.2cm}4
\end{array}
\hspace{1cm}
\begin{array}{ccccccc}
&\hspace{-0.2cm}&\hspace{-0.2cm}&\hspace{-0.2cm}3&\hspace{-0.2cm}&\hspace{-0.2cm}&\hspace{-0.2cm}\\
&\hspace{-0.2cm}&\hspace{-0.2cm}{\color{red}1}&\hspace{-0.2cm}&\hspace{-0.2cm}{\color{red}2}&\hspace{-0.2cm}&\hspace{-0.2cm}\\
&\hspace{-0.2cm}{\color{red}2}&\hspace{-0.2cm}&\hspace{-0.2cm}{\color{red}3}&\hspace{-0.2cm}&\hspace{-0.2cm}{\color{red}4}&\hspace{-0.2cm}\\
1&\hspace{-0.2cm}&\hspace{-0.2cm}2&\hspace{-0.2cm}&\hspace{-0.2cm}3&\hspace{-0.2cm}&\hspace{-0.2cm}4
\end{array}\]

Within an inverted trapezoid, each entry is involved in at most one inversion. To see this, consider all possible ways for two inverted triangles of the same height to overlap, while still enforcing the row-increasing condition. Either a pair of upward inversions meet, an upward inversion is left of a downward inversion, or a pair of downward inversions meet.

\[\begin{array}{ccccc}
 &\hspace{-0.2cm}1&\hspace{-0.2cm}&\hspace{-0.2cm}2&\hspace{-0.2cm}\\
2&\hspace{-0.2cm}&\hspace{-0.2cm}3&\hspace{-0.2cm}&\hspace{-0.2cm}4
\end{array}
\hspace{1cm}
\begin{array}{ccccc}
 &\hspace{-0.2cm}1&\hspace{-0.2cm}&\hspace{-0.2cm}5&\hspace{-0.2cm}\\
2&\hspace{-0.2cm}&\hspace{-0.2cm}3&\hspace{-0.2cm}&\hspace{-0.2cm}4
\end{array}
\hspace{1cm}
\begin{array}{ccccc}
 &\hspace{-0.2cm}3&\hspace{-0.2cm}&\hspace{-0.2cm}4&\hspace{-0.2cm}\\
1&\hspace{-0.2cm}&\hspace{-0.2cm}2&\hspace{-0.2cm}&\hspace{-0.2cm}3
\end{array}\]

In all cases, all entries except one are involved in one inversion. Note that if a downward inversion $(3,2)$ were to be left of an upward inversion $(2,1)$, then the upper row of the inverted trapezoid would have $3$ occurring before $1$, which contradicts the row-increasing condition. The same logic as above can be extended to include inverted trapezoids consisting of more than two overlapping inverted triangles.

We \emph{switch} an inverted trapezoid by swapping the entries of each inversion in the trapezoid. Switching an inverted trapezoid results in another row-increasing triangle. To see this, we consider the same cases as above. Switching each inverted trapezoid above results in the following trapezoids, respectively.

\[\begin{array}{ccccc}
 &\hspace{-0.2cm}2&\hspace{-0.2cm}&\hspace{-0.2cm}3&\hspace{-0.2cm}\\
1&\hspace{-0.2cm}&\hspace{-0.2cm}2&\hspace{-0.2cm}&\hspace{-0.2cm}4
\end{array}
\hspace{1cm}
\begin{array}{ccccc}
 &\hspace{-0.2cm}2&\hspace{-0.2cm}&\hspace{-0.2cm}4&\hspace{-0.2cm}\\
1&\hspace{-0.2cm}&\hspace{-0.2cm}3&\hspace{-0.2cm}&\hspace{-0.2cm}5
\end{array}
\hspace{1cm}
\begin{array}{ccccc}
 &\hspace{-0.2cm}2&\hspace{-0.2cm}&\hspace{-0.2cm}3&\hspace{-0.2cm}\\
1&\hspace{-0.2cm}&\hspace{-0.2cm}3&\hspace{-0.2cm}&\hspace{-0.2cm}4
\end{array}\]

Note that if we were to swap the entries of inversions one at at time, we are not guaranteed to maintain the row-increasing condition. For example, swapping either inversion in the first inverted trapezoid above results in the entry 2 occurring twice in the same row. This is why it is necessary to always switch an inverted trapezoid in its entirety, since all resulting trapezoids above are row-increasing. Again, the same logic can be extended to include inverted trapezoids consisting of more than two overlapping inverted triangles.

Swapping any inversion $(y,x)$ reduces $f(T)$ in a row-increasing triangle $T$. To see this, consider $T$ and the triangle $T'$ resulting from swapping $(y,x)$. Any entry occuring before $x$ in $T'$ also occurred before $x$ in $T$, and any entry occurring before $y$ in $T'$ occurred before $x$ in $T$. Since $x < y$, this means that any entry contributing to $f(T')$ also contributed to $f(T)$, and swapping $x$ and $y$ means that $y$ no longer occurs before $x$ in $T'$. Since the inversion $(y,x)$ did contribute to $f(T)$, we therefore have that $f(T') < f(T)$.

Since switching an inverted trapezoid simply involves swapping multiple inversions, this means that switching an inverted trapezoid in $T$ always results in a row-increasing triangle $T'$ for which $f(T') < f(T)$. Therefore repeatedly switching inverted trapezoids in a row-increasing triangle eventually results in a row-increasing triangle $M$ for which $f(M) = 0$.

$M$ is a monotone triangle with each entry occurring the same number of times as in $T$.
\end{proof}

The proof of Conjecture \ref{proj_conj} follows from the previous lemmas.

\begin{theorem}\label{weighted projection}
Let $v$ be a vector of positive integers. Then there exists an $n\times n$ ASM whose weighted projection is $v$ if and only if $v \preceq (n,n-1,\dots,2,1)$..
\end{theorem}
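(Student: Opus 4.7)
The plan is to assemble the theorem from the two lemmas already proved and the bijection between monotone triangles and ASMs; almost no new work is required, since the reverse (existence) direction is the heart of the matter and the forward (majorization) direction is already established in \cite{brualdidahl}.

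For the forward implication, I would simply cite Brualdi and Dahl's result that the weighted projection of any $n \times n$ ASM is majorized by $z_n = (n,n-1,\dots,2,1)$.

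For the reverse implication, I would proceed in three steps. First, given a vector $v$ of positive integers with $v \preceq (n,n-1,\dots,2,1)$, apply Lemma \ref{row-increasing} to obtain a row-increasing triangle $T$ of order $n$ in which each value $k \in \{1,\dots,n\}$ occurs exactly $v_k$ times. Second, apply Lemma \ref{monotone} to replace $T$ by a monotone triangle $M$ of order $n$ with the same multiplicity of each entry, so that $k$ occurs in $M$ exactly $v_k$ times. Third, use the bijection between monotone triangles and ASMs to produce the ASM $A$ corresponding to $M$, and invoke the observation recorded earlier in the paper that the $j$th entry of $v(A)$ equals the number of occurrences of $j$ as an entry of $M$. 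Combining these, the $j$th entry of $v(A)$ is $v_j$, so $v(A) = v$ as required.

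There is essentially no obstacle at this stage: the genuinely delicate arguments (the Gale--Ryser reduction in Lemma \ref{row-increasing} and the inverted-trapezoid switching procedure in Lemma \ref{monotone}) have already been carried out, and the bijection between monotone triangles and ASMs together with the interpretation of $v(A)$ via the partial sum matrix $p(A)$ has been recorded. The only thing worth emphasising in the write-up is that the construction is explicit and effective: one runs Gale--Ryser to produce a $(0,1)$-matrix, reflects it to obtain $T$, repeatedly switches inverted trapezoids (each switch strictly decreases $f$, so the process terminates) to obtain $M$, and finally reverses the partial-sum correspondence to recover $A$. This yields the promised construction of an ASM with prescribed weighted projection.
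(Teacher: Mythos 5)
Your proposal is correct and follows essentially the same route as the paper: the forward direction is delegated to Brualdi and Dahl's Theorem 19, and the reverse direction chains Lemma \ref{row-increasing}, Lemma \ref{monotone}, and the monotone-triangle/ASM bijection (via the partial sum matrix interpretation of $v(A)$) exactly as the paper does. Your added remark on effectiveness matches the explicit construction the paper records immediately after the theorem, so there is nothing to correct.
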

\begin{proof}
Suppose $A$ is an $n \times n$ ASM whose weighted projection is $v$. Then, from Theorem 19 in \cite{brualdidahl}, it follows that $v \preceq (n,n-1,\dots,2,1)$.

Now suppose that $v \preceq (n,n-1,\dots,2,1)$. From Lemma \ref{row-increasing}, there exists a row-increasing triangle $T$ of order $n$ such that the $k$th entry of $v$ is the number of times $k$ occurs as an entry of $T$. Lemma \ref{monotone} then implies that we can use $T$ to construct a monotone triangle $T'$ with each entry occuring the same number of times as in $T$. We then construct the ASM $A$ corresponding to $T'$, which satisfies $v(A) = v$.
\end{proof}

Note that the proof of this theorem and the previous lemmas outline a construction of an ASM $A$ with given weighted projection $v(A)$. First, one constructs an $n \times n$ $(0,1)$-matrix with row-sum vector $(1,2,\dots,n)$ and column-sum vector $v(A)$ using the construction outlined in Krause's proof of the Gale-Ryser theorem \cite{krause}.

For example, to construct an ASM $A$ with $v(A) = (4,3,1,4,7,5,4)$, we first construct a $(0,1)$-matrix $S$ with row sum $(1,2,\dots,n)$ and column-sum $v(A)$. We do this by initialising $S$ to a $(0,1)$-matrix whose column sum is a permutation of $(1,2,\dots,n)$ with as many entries equal to $v(A)$ as possible. We then repeatedly swap two entries in the same row of $S$, a 1 in a column with a surplus and a 0 in a column with a deficit, until $S$ has the desired column-sum $v(A)$.

\hspace{0.5cm}$\phantom{v(}S\phantom{)}:\:\:\begin{pmatrix}
0&0&0&0&1&0&0\\
0&0&0&{\color{red}0}&1&0&{\color{red}1}\\
0&0&0&0&1&1&1\\
1&0&0&0&1&1&1\\
1&1&0&0&1&1&1\\
1&1&0&1&1&1&1\\
1&1&1&1&1&1&1
\end{pmatrix}
\rightarrow
\begin{pmatrix}
0&0&0&0&1&0&0\\
0&0&0&1&1&0&0\\
0&0&0&0&1&1&1\\
1&0&0&{\color{red}0}&1&1&{\color{red}1}\\
1&1&0&0&1&1&1\\
1&1&0&1&1&1&1\\
1&1&1&1&1&1&1
\end{pmatrix}
\rightarrow
\begin{pmatrix}
0&0&0&0&1&0&0\\
0&0&0&1&1&0&0\\
0&0&0&0&1&1&1\\
1&0&0&1&1&1&0\\
1&1&0&0&1&1&1\\
1&1&0&1&1&1&1\\
1&1&1&1&1&1&1
\end{pmatrix}$

\hspace{0.5cm}$v(S):\:\:\:\:\begin{pmatrix}
4&3&1&2&7&5&6
\end{pmatrix}
\hspace{0.175cm}\rightarrow\hspace{0.175cm}
\begin{pmatrix}
4&3&1&3&7&5&5
\end{pmatrix}
\hspace{0.175cm}\rightarrow\hspace{0.175cm}
\begin{pmatrix}
4&3&1&4&7&5&4
\end{pmatrix}\hspace{0.6cm}$

In general, this matrix implies a row-increasing triangle, which is then used to construct a monotone triangle using the construction of Lemma \ref{monotone}. This monotone triangle then corresponds to the desired ASM $A$.

Returning to our example, we next construct a row-increasing triangle from our $(0,1)$-matrix with row sum $(1,2,\dots,n)$ and column-sum $v(A)$. We then repeatedly switch inverted trapezoids until a monotone triangle is reached, and construct the ASM from our monotone triangle.

\[\begin{array}{ccccccccccccc}
 &\hspace{-0.2cm}     &\hspace{-0.2cm}   &\hspace{-0.2cm}   &\hspace{-0.2cm}   &\hspace{-0.2cm}   &\hspace{-0.2cm}{5}&\hspace{-0.2cm}   &\hspace{-0.2cm}   &\hspace{-0.2cm}   &\hspace{-0.2cm}   &\hspace{-0.2cm}  &\hspace{-0.2cm}  \\
   &\hspace{-0.2cm} &\hspace{-0.2cm}   &\hspace{-0.2cm}   &\hspace{-0.2cm}   &\hspace{-0.2cm}{4}&\hspace{-0.2cm}   &\hspace{-0.2cm}{5}&\hspace{-0.2cm}   &\hspace{-0.2cm}   &\hspace{-0.2cm}   &\hspace{-0.2cm}  &\hspace{-0.2cm}  \\
   &\hspace{-0.2cm} &\hspace{-0.2cm}   &\hspace{-0.2cm}   &\hspace{-0.2cm}{\color{red}5}&\hspace{-0.2cm}   &\hspace{-0.2cm}{\color{red}6}&\hspace{-0.2cm}   &\hspace{-0.2cm}{\color{red}7}&\hspace{-0.2cm}   &\hspace{-0.2cm}   &\hspace{-0.2cm}  &\hspace{-0.2cm}  \\
   &\hspace{-0.2cm} &\hspace{-0.2cm}   &\hspace{-0.2cm}{\color{red}1}&\hspace{-0.2cm}   &\hspace{-0.2cm}{\color{red}4}&\hspace{-0.2cm}   &\hspace{-0.2cm}{\color{red}5}&\hspace{-0.2cm}   &\hspace{-0.2cm}{\color{red}6}&\hspace{-0.2cm}   &\hspace{-0.2cm}  &\hspace{-0.2cm}  \\
  &\hspace{-0.2cm}  &\hspace{-0.2cm}{1}&\hspace{-0.2cm}   &\hspace{-0.2cm}{2}&\hspace{-0.2cm}   &\hspace{-0.2cm}{5}&\hspace{-0.2cm}   &\hspace{-0.2cm}{6}&\hspace{-0.2cm}   &\hspace{-0.2cm}{7}&\hspace{-0.2cm}  &\hspace{-0.2cm} \\
 &\hspace{-0.2cm}{1}&\hspace{-0.2cm}   &\hspace{-0.2cm}{2}&\hspace{-0.2cm}   &\hspace{-0.2cm}{4}&\hspace{-0.2cm}   &\hspace{-0.2cm}{5}&\hspace{-0.2cm}   &\hspace{-0.2cm}{6}&\hspace{-0.2cm}   &\hspace{-0.2cm}{7} &\hspace{-0.2cm}\\
1 &\hspace{-0.2cm}  &\hspace{-0.2cm}{2}&\hspace{-0.2cm}   &\hspace{-0.2cm}{3}&\hspace{-0.2cm}   &\hspace{-0.2cm}{4}&\hspace{-0.2cm}   &\hspace{-0.2cm}{5}&\hspace{-0.2cm}   &\hspace{-0.2cm}{6}&\hspace{-0.2cm}  &\hspace{-0.2cm} 7\\
\end{array}
\rightarrow
\begin{array}{ccccccccccccc}
 &\hspace{-0.2cm}     &\hspace{-0.2cm}   &\hspace{-0.2cm}   &\hspace{-0.2cm}   &\hspace{-0.2cm}   &\hspace{-0.2cm}{5}&\hspace{-0.2cm}   &\hspace{-0.2cm}   &\hspace{-0.2cm}   &\hspace{-0.2cm}   &\hspace{-0.2cm}  &\hspace{-0.2cm}  \\
   &\hspace{-0.2cm} &\hspace{-0.2cm}   &\hspace{-0.2cm}   &\hspace{-0.2cm}   &\hspace{-0.2cm}{4}&\hspace{-0.2cm}   &\hspace{-0.2cm}{5}&\hspace{-0.2cm}   &\hspace{-0.2cm}   &\hspace{-0.2cm}   &\hspace{-0.2cm}  &\hspace{-0.2cm}  \\
   &\hspace{-0.2cm} &\hspace{-0.2cm}   &\hspace{-0.2cm}   &\hspace{-0.2cm}{4}&\hspace{-0.2cm}   &\hspace{-0.2cm}{5}&\hspace{-0.2cm}   &\hspace{-0.2cm}{6}&\hspace{-0.2cm}   &\hspace{-0.2cm}   &\hspace{-0.2cm}  &\hspace{-0.2cm}  \\
   &\hspace{-0.2cm} &\hspace{-0.2cm}   &\hspace{-0.2cm}{1}&\hspace{-0.2cm}   &\hspace{-0.2cm}{5}&\hspace{-0.2cm}   &\hspace{-0.2cm}{6}&\hspace{-0.2cm}   &\hspace{-0.2cm}{7}&\hspace{-0.2cm}   &\hspace{-0.2cm}  &\hspace{-0.2cm}  \\
  &\hspace{-0.2cm}  &\hspace{-0.2cm}{1}&\hspace{-0.2cm}   &\hspace{-0.2cm}{2}&\hspace{-0.2cm}   &\hspace{-0.2cm}{5}&\hspace{-0.2cm}   &\hspace{-0.2cm}{6}&\hspace{-0.2cm}   &\hspace{-0.2cm}{7}&\hspace{-0.2cm}  &\hspace{-0.2cm} \\
 &\hspace{-0.2cm}{1}&\hspace{-0.2cm}   &\hspace{-0.2cm}{2}&\hspace{-0.2cm}   &\hspace{-0.2cm}{4}&\hspace{-0.2cm}   &\hspace{-0.2cm}{5}&\hspace{-0.2cm}   &\hspace{-0.2cm}{6}&\hspace{-0.2cm}   &\hspace{-0.2cm}{7} &\hspace{-0.2cm}\\
1 &\hspace{-0.2cm}  &\hspace{-0.2cm}{2}&\hspace{-0.2cm}   &\hspace{-0.2cm}{3}&\hspace{-0.2cm}   &\hspace{-0.2cm}{4}&\hspace{-0.2cm}   &\hspace{-0.2cm}{5}&\hspace{-0.2cm}   &\hspace{-0.2cm}{6}&\hspace{-0.2cm}  &\hspace{-0.2cm} 7\\
\end{array}
\implies
\begin{pmatrix}
&&&&+&&\\
&&&+&&&\\
&&&&&+&\\
+&&&-&&&+\\
&+&&&&&\\
&&&+&&&\\
&&+&&&&
\end{pmatrix}\]

A direct consequence of Theorem \ref{weighted projection} is the following.
\begin{corollary}
There exists a monotone triangle for which $1,2,\dots,n$ occur as entries $v_1,v_2,\dots,v_n$ times if and only if $(v_1,v_2,\dots,v_n) \preceq (n,n-1,\dots,2,1)$.
\end{corollary}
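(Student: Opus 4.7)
The plan is to deduce this corollary directly from Theorem \ref{weighted projection} via the bijection between ASMs and monotone triangles that was recalled earlier in the section. Recall that for an $n \times n$ ASM $A$ with associated partial sum matrix $p(A)$ and monotone triangle $T(A)$, it was observed that the $j$th entry of $v(A)$ equals the number of $+$ entries in column $j$ of $p(A)$, which equals the number of times $j$ appears as an entry of $T(A)$. This observation is what makes the corollary immediate.

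For the forward direction, I would suppose that $T$ is a monotone triangle of order $n$ in which each $k \in \{1,\dots,n\}$ occurs exactly $v_k$ times. Under the bijection, $T$ corresponds to an $n \times n$ ASM $A$ whose weighted projection satisfies $v(A) = (v_1,\dots,v_n)$ by the occurrence-count identity above. Theorem \ref{weighted projection} then yields $(v_1,\dots,v_n) \preceq (n,n-1,\dots,2,1)$.

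For the reverse direction, if $(v_1,\dots,v_n) \preceq (n,n-1,\dots,2,1)$, Theorem \ref{weighted projection} supplies an $n \times n$ ASM $A$ with $v(A) = (v_1,\dots,v_n)$. Applying the bijection in the opposite direction, let $T$ be the monotone triangle corresponding to $A$. By the same occurrence-count identity, each $k$ occurs exactly $v_k$ times in $T$, completing the proof.

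There is no real obstacle here: the corollary is essentially a translation of Theorem \ref{weighted projection} across the ASM/monotone-triangle bijection, and the only content needed beyond invoking the theorem is the identification (already established in the exposition preceding Definition of majorization) between the components of $v(A)$ and the multiplicities of entries in $T(A)$. The proof is therefore a two-sentence argument in each direction.
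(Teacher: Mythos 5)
Your proof is correct and matches the paper's approach exactly: the paper states this corollary as a direct consequence of Theorem \ref{weighted projection}, relying on precisely the bijection and occurrence-count identity (the $j$th entry of $v(A)$ equals the number of times $j$ appears in the corresponding monotone triangle) that you spell out. You have simply made explicit the two-directional translation the paper leaves implicit, and no gap remains.
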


Note that similar results have previously been proven in relation to a generalisation of the Birkhoff polytope of doubly stochastic matrices called the \emph{alternating sign matrix polytope} \cite{behrend}.

\section{The alternating sign matrix polytope}

The alternating sign matrix polytope $ASM_n$ is the set of $n \times n$ matrices with real entries for which all row and column sums are 1, and all partial row and column sums (extending from either end of a row or column) are non-negative. Equivalently, $ASM_n$ is the convex hull of the $n \times n$ alternating sign matrices in the space $\mathbb{R}^{n \times n}$ of all real $n \times n$ matrices.

Striker \cite{striker} proved that the weighted projection of all elements of the ASM polytope form the \emph{permutohedron} (Theorem 3.8). In this context, the \emph{integer points} of the permutohedron are exactly those positive integer vectors which are majorized by $z_n = (n, \dots, 2, 1)$.

Our result, Theorem \ref{weighted projection}, would follow directly from Striker's result if ASMs were the only elements of the ASM polytope whose weighted projections were integer points of the permutohedron. This is not the case, however. Therefore Striker's result does not guarantee that all integer points in the permutohedron are the weighted projection of some ASM, but rather that all are the weighted projection of some matrix in the ASM polytope.

\begin{example}\label{polytope elements} The following are some examples of matrices in $ASM_3$ with weighted projection $(2, 2, 2)$.
\[\begin{pmatrix}
\frac{1}{3}&\frac{1}{3}&\frac{1}{3}\\[6pt]
\frac{1}{3}&\frac{1}{3}&\frac{1}{3}\\[6pt]
\frac{1}{3}&\frac{1}{3}&\frac{1}{3}
\end{pmatrix}
\hspace{1cm}
\begin{pmatrix}
\frac{1}{3}&\frac{1}{4}&\frac{5}{12}\\[6pt]
\frac{1}{3}&\frac{1}{2}&\frac{1}{6}\\[6pt]
\frac{1}{3}&\frac{1}{4}&\frac{5}{12}
\end{pmatrix}
\hspace{1cm}
\begin{pmatrix}
\frac{1}{6}&\frac{2}{3}&\frac{1}{6}\\[6pt]
\frac{2}{3}&-\frac{1}{3}&\frac{2}{3}\\[6pt]
\frac{1}{6}&\frac{2}{3}&\frac{1}{6}
\end{pmatrix}
\hspace{1cm}
\begin{pmatrix}
\frac{1}{16}&\frac{3}{4}&\frac{3}{16}\\[6pt]
\frac{7}{8}&-\frac{1}{2}&\frac{5}{8}\\[6pt]
\frac{1}{16}&\frac{3}{4}&\frac{3}{16}
\end{pmatrix}
\]
\end{example}

Armstrong and McKeown \cite{mckeown} define a surjection from $n \times n$ ASMs to the integer points of the permutohedron of order $n-1$ (Proposition 3), which takes an ASM $A$ and sends it to the \emph{center of mass} of its \emph{generalised Waldspurger matrix} $WT(A)$. The center of mass of $WT(A)$ is simply its column-sum vector.

For example, the following ASMs both have weighted projection $(3,2,3,2)$ and the center of mass of their generalised Waldspurger matrices are both $(1,2,1)$.
\[A = \begin{pmatrix}
&+&&\\
+&-&+&\\
&&&+\\
&+&&
\end{pmatrix}
\hspace{1cm}
WT(A) = \begin{pmatrix}
1&&\\
&1&\\
&1&1
\end{pmatrix}\]
\[B = \begin{pmatrix}
&&+&\\
+&&&\\
&+&-&+\\
&&+&
\end{pmatrix}
\hspace{1cm}
WT(B) = \begin{pmatrix}
1&1&\\
&1&\\
&&1
\end{pmatrix}\]

While this result may be more analogous to our own, the methods of proof are distinct. Armstrong and McKeown focus on Waldspurger matrices and employ a number of geometric techniques. Additionally, as previously outlined, the proof of Theorem \ref{weighted projection} gives an explicit construction for an ASM with given weighted projection.

Example \ref{polytope elements} and the prior discussion of the difference between Striker's result and our own motivates the study of the pre-images of integer points of the permutohedron of order $n$. In order to understand how elements of the ASM polytope with the same weighted projection relate to one another, we define a \emph{T-block} \cite{finite order} as follows. An $n \times n$ T-block $T_{i_1,j_1:\,i_2,j_2}$ is a matrix $\pm[t_{ij}]$ such that
\[t_{ij} =  \begin{cases} 
      \color{white}-\color{black}1  & \text{ if } (i,j) \in \{(i_1,j_1), (i_2,j_2)\} \\
      -1 & \text{ if } (i,j) \in \{(i_2,j_1), (i_1,j_2)\} \\
      \color{white}-\color{black}0  & \text{otherwise} 
   \end{cases}
\] where $i_1 < i_2$, and $j_1 < j_2$. A T-block is most usefully visualised as an $n \times n$ matrix whose non-zero entries form a (not necessarily contiguous) copy of the following  \cite{patterns}.
\[\pm\begin{pmatrix}
\hspace{0.3cm}1&-1\\
-1&\hspace{0.3cm}1
\end{pmatrix}\]

All $n \times n$ ASMs can be obtained from one another through the addition of T-blocks \cite{bruhat}. The following is the analogous result for elements of the ASM polytope of order $n$.

\begin{lemma}\label{T-block}Let $A,B \in ASM_n$. Then $B$ can be expressed as the sum of $A$ and scalar multiples of T-blocks.\end{lemma}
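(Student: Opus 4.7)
The plan is to reduce the statement to a linear-algebra decomposition of the difference matrix $D = B - A$. Since both $A$ and $B$ lie in $ASM_n$, every row sum and every column sum of $A$ and of $B$ equals $1$; hence every row sum and every column sum of $D$ equals $0$. A T-block also has every row sum and every column sum equal to $0$, so the claim amounts to showing that the space of real $n \times n$ matrices with zero row and column sums is spanned by T-blocks. This space has dimension $(n-1)^2$, which matches the number of T-blocks of a convenient anchored form, so a direct construction should work.

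The first step is to restrict attention to the T-blocks anchored at the bottom-right corner, namely $T_{i,j:\,n,n}$ for $1 \le i,j \le n-1$. Writing $D = [d_{ij}]$, I would define
\[
D' = \sum_{i=1}^{n-1}\sum_{j=1}^{n-1} d_{ij}\, T_{i,j:\,n,n}.
\]
Because the sign conventions in the definition of a T-block allow either $+[t_{ij}]$ or $-[t_{ij}]$, the scalar $d_{ij}$ is permitted to be negative. Next I would verify entrywise that $D' = D$. For $i, j \le n-1$, only the term indexed by $(i,j)$ contributes at position $(i,j)$, yielding $d_{ij}$. For $i \le n-1$ and $j = n$, the contributions come from $T_{i,k:\,n,n}$ for all $k \le n-1$, giving $-\sum_{k=1}^{n-1} d_{ik}$, which equals $d_{in}$ because the $i$-th row sum of $D$ is zero. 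Symmetrically, the $(n,j)$ entry of $D'$ is $d_{nj}$ for $j \le n-1$.

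The only step that requires a little care is the $(n,n)$ entry: here $D'$ contributes $\sum_{i,j \le n-1} d_{ij}$. Using the row-sum identities applied to the last column, one has $d_{nn} = -\sum_{j=1}^{n-1} d_{nj} = -\sum_{j=1}^{n-1}\!\bigl(-\sum_{i=1}^{n-1} d_{ij}\bigr) = \sum_{i,j \le n-1} d_{ij}$, so the $(n,n)$ entries agree as well. Equivalently, the column-sum conditions could be used; the coincidence of the two expressions for $d_{nn}$ is where the full zero row/column sum hypothesis on $D$ is consumed.

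Having shown $D = D'$, we conclude $B = A + \sum_{i,j \le n-1} d_{ij}\, T_{i,j:\,n,n}$, which is the desired expression of $B$ as $A$ plus scalar multiples of T-blocks. I do not anticipate any substantive obstacle; the argument is essentially a change-of-basis computation, and the only point at which both the row- and column-sum conditions must be invoked simultaneously is the verification of the $(n,n)$ entry.
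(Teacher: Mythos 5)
Your proof is correct, but it takes a genuinely different route from the paper. The paper argues iteratively: it locates the first non-zero entry $D_{ij}$ of $D = B - A$, finds entries of opposite sign in the same row and in the same column (these exist because all line sums of $D$ vanish), subtracts a single scalar multiple $c_1 T^{(1)}$ of a T-block chosen so that the first non-zero entry of the result occurs strictly later, and repeats until the zero matrix is reached. You instead give a one-shot change-of-basis computation: the $(n-1)^2$ anchored T-blocks $T_{i,j:\,n,n}$, $1 \le i,j \le n-1$, span the space of matrices with zero row and column sums, with coefficients read off directly as the top-left $(n-1)\times(n-1)$ entries of $D$, and the boundary entries verified from the line-sum identities. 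Your verification is sound, including the $(n,n)$ entry, where both hypotheses are consumed; note only that your first equality $d_{nn} = -\sum_{j=1}^{n-1} d_{nj}$ is the row-$n$ identity and the inner substitution $d_{nj} = -\sum_{i=1}^{n-1} d_{ij}$ is the column identity, so the phrase ``row-sum identities applied to the last column'' mislabels the steps even though the computation is right. Your approach buys explicitness: a closed formula with at most $(n-1)^2$ terms, handling $A = B$ automatically, and in fact exhibiting the anchored T-blocks as a basis of the zero-line-sum space. What the paper's greedy elimination buys is reusability: the same inductive scheme is redeployed almost verbatim in the proof of Theorem \ref{same projection}, where the T-blocks must additionally be chosen in pairs of opposite depth --- a constraint the fixed anchored family does not accommodate directly, since the depths of the $T_{i,j:\,n,n}$ are not paired off in any controlled way.
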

\begin{proof}
If $A = B$, this is trivial. We therefore assume $A \not= B$ and consider $D = B-A$. Let $i$ be the lowest index of a row of $D$ containing a non-zero entry, and let $j$ be the lowest index of a non-zero entry in row $i$. Since $A$ and $B$ have all row sums equal to 1, $D$ has all row sums equal to zero, and there must therefore be some entry $D_{ij'}$ in row $i$ of opposite sign to $D_{ij}$. Similarly, there must be some entry $D_{i'j}$ in column $j$ of opposite sign to $D_{ij}$.

Let $c_1 = D_{ij}$, $T^{(1)} = T_{i,j;\; i'j'}$, and $D' = D - c_1T^{(1)}$. In every row/column that we have added $c_1$ to an entry, we have also subtracted $c_1$ from another entry. Therefore $D'$ is a matrix with all row and column sums equal to zero, with the first non-zero entry occurring in a later position than in $D$. We repeat this step, taking note of $c_k$ and $T^{(k)}$ on step $k$ and redefining $D'$ to be a matrix with row/column sums equal to zero and with its first non-zero entry moving to a later position in each step. We therefore eventually reach a zero matrix $D'$ on some step $m$, and we can rearrange to find
\[B = A + c_1T^{(1)} + \dots + c_mT^{(m)},\]
for T-blocks $T^{(1)}, \dots, T^{(m)}$.
\end{proof}

\begin{example}\label{asm plus T-blocks} Consider the alternating sign matrix
\[A = \begin{pmatrix}
&+&\\
+&-&+\\
&+&
\end{pmatrix},\]
 and the following matrices from Example \ref{polytope elements}.
\[\begin{pmatrix}
\frac{1}{3}&\frac{1}{3}&\frac{1}{3}\\[6pt]
\frac{1}{3}&\frac{1}{3}&\frac{1}{3}\\[6pt]
\frac{1}{3}&\frac{1}{3}&\frac{1}{3}
\end{pmatrix}
= A + \frac{1}{3}(T_{1,1;\;2,2} - T_{1,2;\;2,3} + T_{2,2;\;3,3} - T_{2,1;\;3,2})\]
\[\begin{pmatrix}
\frac{1}{16}&\frac{3}{4}&\frac{3}{16}\\[6pt]
\frac{7}{8}&-\frac{1}{2}&\frac{5}{8}\\[6pt]
\frac{1}{16}&\frac{3}{4}&\frac{3}{16}
\end{pmatrix}
= A + \frac{1}{16}(T_{1,1;\;2,2} - T_{2,1;\;3,2}) + \frac{3}{16}(T_{2,2;\;3,3} - T_{1,2;\;2,3})\]
\end{example}

We define the \emph{depth} $d(T)$ of a T-block as follows.
\[d(T) =  \begin{cases} 
      i_2-i_1 & \text{ if } T = +T_{i_1,j_1:\,i_2,j_2}\\
      i_1-i_2 & \text{ if } T = -T_{i_1,j_1:\,i_2,j_2}
   \end{cases}
\] 

We say that T-blocks $T$ and $T'$ have \emph{opposite} depth if $d(T) = -d(T')$.

We now characterise all elements of the ASM polytope that are mapped by weighted projection to the same point of the permutohedron.

\begin{theorem}\label{same projection}Let $A,B \in ASM_n$. Then $v(A) = v(B)$ if and only if $B$ can be expressed as the sum of $A$ and scalar multiples of pairs of T-blocks of opposite depth.\end{theorem}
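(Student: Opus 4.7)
The plan is to prove the biconditional in both directions. The forward direction is a short calculation; the reverse uses Lemma~\ref{T-block} together with two splitting identities between T-blocks, reducing the problem to an elementary situation in which pairs emerge through a telescoping argument.

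For the forward direction, a column-by-column calculation with the weights $z_n$ shows that for any T-block $T = \epsilon T_{i_1, j_1; i_2, j_2}$, the weighted projection is $v(T) = d(T)(e_{j_1} - e_{j_2})$, where $e_j$ denotes the $j$-th standard basis vector of $\mathbb{R}^n$. Consequently, if $T$ and $T'$ are a pair of T-blocks of opposite depth sharing the same column indices $(j_1, j_2)$, then $v(T + T') = (d(T) + d(T'))(e_{j_1} - e_{j_2}) = 0$, so any scalar combination of such pairs lies in the kernel of $v$, and adding them to $A$ yields a matrix $B$ with $v(B) = v(A)$. (This confirms that the ``pairs'' relevant to the theorem are exactly those with matching column indices, as in Example~\ref{asm plus T-blocks}.)

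For the reverse direction, Lemma~\ref{T-block} expresses $D := B - A$ as a finite linear combination of T-blocks. I plan to reorganize this sum using two splitting identities, each a routine $2 \times 2$-block verification: the column-splitting identity $T_{i_1, j_1; i_2, j_3} = T_{i_1, j_1; i_2, j_2} + T_{i_1, j_2; i_2, j_3}$ for $j_1 < j_2 < j_3$, and the analogous row-splitting identity $T_{i_1, j; i_3, j+1} = T_{i_1, j; i_2, j+1} + T_{i_2, j; i_3, j+1}$ for $i_1 < i_2 < i_3$. Iterating these rewrites $D = \sum_{i, j} c_{i, j}\, T_{i, j; i+1, j+1}$ as a linear combination of elementary $2 \times 2$ T-blocks of depth $\pm 1$, without altering $D$ as a matrix. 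In this form, $v(D) = \sum_j \bigl(\sum_i c_{i, j}\bigr)(e_j - e_{j+1})$, and the linear independence of $\{e_j - e_{j+1}\}_{j=1}^{n-1}$ forces $\sum_i c_{i, j} = 0$ for each fixed $j$. Setting $\mu_k := \sum_{i \le k} c_{i, j}$ for each $j$, a standard telescoping argument then gives $\sum_i c_{i, j}\, T_{i, j; i+1, j+1} = \sum_k \mu_k \bigl(T_{k, j; k+1, j+1} - T_{k+1, j; k+2, j+1}\bigr)$, each summand being a scalar multiple of a pair of T-blocks of opposite depth ($+1$ and $-1$) sharing the columns $(j, j+1)$.

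The main obstacle is carrying out the splitting reductions cleanly: verifying that the column and row identities hold, and that iterating them terminates in an elementary-form decomposition of $D$ that preserves the matrix exactly. Once elementary form is reached, the vanishing of $v(D)$ decouples into a single scalar equation per adjacent column pair, and the required pairs appear transparently through the telescoping step.
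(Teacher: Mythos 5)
Your proof is correct, and while your forward direction matches the paper's in substance, your reverse direction takes a genuinely different route. The paper does not post-process Lemma~\ref{T-block}; instead it reruns that lemma's greedy argument on $D = B - A$ with the pairing built in: taking the leading non-zero entry $D_{ij}$ (assumed positive), it uses $v(D) = (0,\dots,0)$ to find a second positive entry $D_{pj}$ lying below a negative entry $D_{i'j}$ in the same column, peels off $c_1(T^{(1)} + S^{(1)})$ with $T^{(1)} = T_{i,j;\,i',j'}$ and $S^{(1)}$ anchored at rows $p, p'$ chosen so that $d(S^{(1)}) = -d(T^{(1)})$, and then inducts on the position of the first non-zero entry until $D$ is exhausted. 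You instead take Lemma~\ref{T-block} as a black box and reorganise its output: your two splitting identities are correct (both are routine sign checks), they terminate in the elementary form $D = \sum_{i,j} c_{i,j}\,T_{i,j;\,i+1,j+1}$, the hypothesis $v(D)=0$ then decouples via linear independence of $\{e_j - e_{j+1}\}$ into one scalar equation $\sum_i c_{i,j} = 0$ per $j$, and the Abel summation is valid precisely because the boundary terms $\mu_0$ and $\mu_{n-1}$ vanish. What your route buys is a normal form the paper's proof does not give — every difference of equal-projection elements of $ASM_n$ is a combination of \emph{adjacent} opposite-depth pairs $T_{k,j;\,k+1,j+1} - T_{k+1,j;\,k+2,j+1}$ with depths exactly $\pm 1$ — together with a clean isolation of where $v(D)=0$ is actually used; the paper's route buys a self-contained construction that never needs the splitting calculus. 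One further point in your favour: your parenthetical about matching column indices is not cosmetic. Since $v(T) = d(T)(e_{j_1} - e_{j_2})$ for $T = \pm T_{i_1,j_1;\,i_2,j_2}$, opposite depth alone does not make $v(T)+v(T')$ vanish unless $T$ and $T'$ share their column pair; the paper's forward direction asserts the cancellation from opposite depth alone, which is imprecise as literally stated (its converse construction does produce column-sharing pairs, so the theorem holds under that reading), whereas you state and use the required column condition explicitly.
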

\begin{proof}
Suppose $B$ can be expressed as such a sum. Then, taking the weighted projection of both sides:
\[v(B) = v\big(A + c_1(T_1 + T'_1) + \dots + c_k(T_k + T'_k)\big),\]
where $d(T_i) = -d(T'_i)$. Since the weighted projection is a linear transformation,
\[v(B)  = v(A) + c_1\big(v(T_1) + v(T'_1)\big) + \dots + c_k\big(v(T_k) + v(T'_k)\big).\]
Since $T_i$ and $T'_i$ have opposite depth, each $\big(v(T_i) + v(T'_i)\big)$ is equal to zero. Therefore 
\[v(B) = v(A) + c_1\cdot0 + \dots + c_k\cdot0 = v(A).\]

Now suppose $v(A) = v(B)$. If $A=B$, this is trivially true. Therefore assume $A \not= B$ and let $D=B-A$. Note that all row and column sums of $D$ are zero, and since $v(A) = v(B)$,
\[v(D) = v(B-A) = v(B)-v(A) = (0,0,\dots,0).\]

Since $A \not=B$, this means that $D$ contains non-zero entries. Let $j$ be the index of the first column of $D$ with non-zero entries, and let $i$ be the index of the first entry in column $j$ that is non-zero. Since all line sums are zero, there is at least one entry $D_{i'j}$ in column $j$ and another entry $D{ij'}$ in row $i$ which have opposite sign to $D_{ij}$.

Without loss of generality, assume $D_{i,j}$ is positive. If all negative entries in column $i$ occurred after all positive entries, then the entry of $v(D)$ corresponding to column $i$ would be negative. Since $v(D) = 0$, this is false. Therefore there are at least two positive entries $D_{ij}$ and $D_{pj}$ in column $j$ of $D$, with a negative entry $D_{i'j}$ between them. Let $p'$ be such that $p - p' = -(i - i')$. Note that $p' > i$, since $i'$ occurs between $i$ and $p$.

Let $T^{(1)} = T_{i,j;i'j'}$ and $S^{(1)} = T_{p,j;p',j'}$, and note that $T^{(1)}$ has opposite depth to $S^{(1)}$, since $p - p' = -(i - i')$. Let $c_1 = D_{ij}$, and let $D' = D - c_1(T^{(1)}+S^{(1)})$. In every row or column that we have added $c_1$ to an entry, we have also subtracted it from another entry. Therefore $D'$ is a matrix with all line sums equal to zero, and with the first non-zero entry occurring in a later position than in $D$. We repeat this step, taking note of $c_k$, $T^{(k)}$, and $S^{(k)}$ on step $k$, and redefining $D'$ to be a matrix with line sums equal to zero and with its first non-zero entry moving to a later position in each step. We therefore eventually reach a zero matrix $D'$ on some step $m$, and we can rearrange to find
\[B = A + c_1(T^{(1)}+S^{(1)}) + \dots + c_m(T^{(m)}+S^{(m)}),\]
where each $T^{(k)}$ and $S^{(k)}$ are a pair of T-blocks of opposite depth.
\end{proof}

For example, all matrices in Example \ref{asm plus T-blocks} have weighted projection $(2,2,2)$. It was also shown that each matrix in the example can be obtained from the ASM $A$ by the addition of scalar multiples of pairs of T-blocks of opposite depth.

A direct consequence of Theorem \ref{same projection} is the following result, concerning the integer points of the permutohedron.
\begin{corollary}
Let $A$ be an $n \times n$ alternating sign matrix and $M \in ASM_n$. Then $v(M) = v(A)$ if and only if $M$ can be expressed as the sum of $A$ and scalar multiples of pairs of T-blocks of opposite depth.
\end{corollary}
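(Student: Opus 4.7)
The plan is to observe that this corollary is an immediate specialisation of Theorem \ref{same projection} and to spell out the one-line reduction. The key observation is that every $n \times n$ alternating sign matrix $A$ is itself an element of the polytope $ASM_n$: indeed, $ASM_n$ is defined as the convex hull of the ASMs, so the ASMs lie inside (in fact as vertices of) $ASM_n$. Hence the hypothesis of Theorem \ref{same projection} applies with $B = M$.

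Given this, the proof proceeds in two short directions. For the ``if'' direction, if $M = A + \sum_i c_i(T_i + T_i')$ with $d(T_i) = -d(T_i')$, then by the linearity of $v$ and the fact that $v(T_i) + v(T_i') = 0$ whenever $T_i$ and $T_i'$ have opposite depth (as shown in the proof of Theorem \ref{same projection}), we obtain $v(M) = v(A)$. For the ``only if'' direction, assuming $v(M) = v(A)$ we apply Theorem \ref{same projection} to the pair $A, M \in ASM_n$, which immediately yields the desired decomposition. No new construction is required.

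There is essentially no obstacle here: the corollary is really just the statement that Theorem \ref{same projection} remains meaningful when one of the two polytope elements happens to be a genuine ASM, which is automatic since ASMs are vertices of $ASM_n$. The only thing worth making explicit in the write-up is the inclusion $\{n \times n \text{ ASMs}\} \subseteq ASM_n$, after which the result follows verbatim from Theorem \ref{same projection}.
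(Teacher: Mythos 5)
Your proposal is correct and matches the paper exactly: the paper states this corollary as a direct consequence of Theorem \ref{same projection}, obtained precisely by noting that every $n \times n$ ASM lies in $ASM_n$ (as a vertex of the polytope) and then applying the theorem with $B = M$. Making the inclusion $\{n \times n \text{ ASMs}\} \subseteq ASM_n$ explicit, as you do, is the only content needed.
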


Note that elements of $ASM_n$ with the same weighted projection have a similar characterisation to that of alternating sign hypermatrices corresponding to the same Latin-like square \cite{ASHM-decomp}. We now consider the limitations of Theorem \ref{weighted projection} for the characterisation of ASHLs.


\section{Returning to the characterisation of ASHLs}
An immediate consequence of Theorem \ref{weighted projection} is the following.
\begin{corollary}\label{major_cor}
Let $v$ be a row or column of an ASHL. Then $v \preceq (n,n-1,\dots,2,1)$.
\end{corollary}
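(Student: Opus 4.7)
The plan is to reduce this to the forward (easy) direction of Theorem~\ref{weighted projection}: the weighted projection of any $n \times n$ ASM is majorized by $z_n$. It therefore suffices to exhibit, for each row and each column of the ASHL $L = L(A)$, an $n \times n$ ASM whose weighted projection equals that row or column. The informal remark in Section~2 (``any row or column of an ASHL is the weighted projection of some ASM'') is exactly what needs to be made precise.

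First, I would fix a row index $i$ and form the $n \times n$ matrix $M^{(i)}$ with entries $M^{(i)}_{kj} = a_{ijk}$. Its rows are the column lines $A_{i*k}$ of $A$ and its columns are the vertical lines $A_{ij*}$, and all of these alternate in sign and start and end with $+1$ by the definition of an ASHM; hence $M^{(i)}$ is itself an $n \times n$ ASM. Next, I would observe that reversing the rows of an ASM yields an ASM (each column, being an alternating sign sequence starting and ending with $+1$, remains so after reversal, and the rows are merely permuted). Let $\widetilde{M}^{(i)}$ denote $M^{(i)}$ with its rows reversed.

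Then a direct calculation gives, for each $j$,
\[L_{ij} \;=\; \sum_{k=1}^n k\,a_{ijk} \;=\; \sum_{k=1}^n (n-k+1)\,\widetilde{M}^{(i)}_{kj} \;=\; \bigl(z_n^T \widetilde{M}^{(i)}\bigr)_j,\]
so row $i$ of $L$ equals $v(\widetilde{M}^{(i)})$. Theorem~\ref{weighted projection} (equivalently, Theorem~19 of~\cite{brualdidahl}) then yields $L_{i*} \preceq z_n$. A symmetric argument, slicing $A$ at a fixed column index $j$ in place of a row index $i$, handles the case where $v$ is a column of $L$.

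There is no real obstacle: the construction is forced once one identifies the correct 2-dimensional slice of the ASHM. The one point requiring care is aligning the weight vector $z_n = (n, n-1, \dots, 1)$ with the coefficients $1, 2, \dots, n$ appearing in $L(A) = \sum_k k\,P_k(A)$, which is precisely what the row-reversal step accomplishes.
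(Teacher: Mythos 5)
Your proposal is correct and follows exactly the route the paper intends: the paper treats the corollary as an immediate consequence of Theorem~\ref{weighted projection} together with the Section~2 remark that any row or column of an ASHL is the weighted projection of some ASM, and your slicing construction $M^{(i)}_{kj} = a_{ijk}$ (with the row reversal to align the weights $1,\dots,n$ with $z_n = (n,\dots,1)$) is precisely the verification the paper leaves implicit. Nothing further is needed.
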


This means the integer that can occur most as an entry in an ASHL is $\frac{n+1}{2}$ if $n$ odd, and $\frac{n}{2}$ or $\frac{n+2}{2}$ if $n$ even. However, Corollary \ref{major_cor} is not a sufficient condition for a given $n \times n$ array of positive integers to be an ASHL. For example, the following array has each row and column majorized by $(n,n-1,\dots,2,1)$, but is not an ASHL.

\[\begin{array}{|c|c|c|}
\hline
    2 & 2 & 2 \\
\hline
    3 & 2 & 1 \\
\hline
    1 & 2 & 3\\
\hline
\end{array}\]

This is because the outer rows and columns of an ASHL must contain each entry exactly once. This follows from the fact that the outer rows and columns of an ASM contain exactly one non-zero entry, which is $+1$.

More generally, the number of non-zero entries in row/column $1,2,3,\dots, n-2, n-1,n$ of an ASM is bounded above by $1,3,5,\dots,5,3,1$. Extrapolating from this, one might posit that the number of times a particular number can occur as an entry in row/column $1,2,3,\dots, n-2, n-1,n$ of an ASHL is also bounded above by $1,3,5,\dots,5,3,1$. 

This is not the case, however. The construction given in the proof of Theorem 3.3 in \cite{ASHM-decomp} defines an $n \times n$ ASHL for all $n \in \mathbb{N}$ which exceeds this bound by 2 in rows/columns $2, 3, \dots, \lceil\frac{n-4}{2}\rceil$ and $\lceil\frac{n+5}{2}\rceil, \dots, n-2, n-1$. An example of this construction is the following $7 \times 7$ ASHL, with the same entry occurring 5 times in row/column 2 and 6.

\[\begin{array}{|c|c|c|c|c|c|c|}
\hline
6&3&1&4&7&5&2\\
\hline
3&\SmallArray{1-3\\+6}&4&\SmallArray{3-4\\+5}&4&\SmallArray{2-5\\+7}&5\\
\hline
1&4&\SmallArray{3-4\\+5}&\SmallArray{2-3+4\\-5+6}&\SmallArray{3-4\\+5}&4&7\\
\hline
4&\SmallArray{3-4\\+5}&\SmallArray{2-3+4\\-5+6}&\SmallArray{+1-2\\+3-4+5\\-6+7}&\SmallArray{2-3+4\\-5+6}&\SmallArray{3-4\\+5}&4\\
\hline
7&4&\SmallArray{3-4\\+5}&\SmallArray{2-3+4\\-5+6}&\SmallArray{3-4\\+5}&4&1\\
\hline
5&\SmallArray{2-5\\+7}&4&\SmallArray{3-4\\+5}&4&\SmallArray{1-3\\+6}&3\\
\hline
2&5&7&4&1&3&6\\
\hline
\end{array}
\hspace{1cm}\rightarrow\hspace{1cm}
\begin{array}{|c|c|c|c|c|c|c|}
\hline
6&3&1&4&7&5&2\\
\hline
3&4&4&4&4&4&5\\
\hline
1&4&4&4&4&4&7\\
\hline
4&4&4&4&4&4&4\\
\hline
7&4&4&4&4&4&1\\
\hline
5&4&4&4&4&4&3\\
\hline
2&5&7&4&1&3&6\\
\hline
\end{array}\]

For simplicity, assume $n$ is odd, let  $k = \frac{n+1}{2}$, and consider some $n \times n \times n$ ASHM $A$ with corresponding ASHL $L(A)$. Note that the following analysis of columns can similarly be applied to rows or to ASHMs of even order.

From the previous example, we know it is possible to have $k$ occurring as an entry $n$ times in the central column. The following example demonstrates that it is also possible to have $k$ occurring $n$ times in an off-centre column, since 4 occurs as every entry in the third column.

\[\begin{array}{|c|c|c|c|c|c|c|}
\hline
 3 & 5 & 4 & 7 & 1 & 2 & 6\\
\hline
 1 & 4 & \SmallArray{3-4\\+5} & 4 & 6 & 7 & 2\\
\hline
 7 & 3 & \SmallArray{1-3\\+6} & 3 & 2 & 4 & 5\\
\hline
 5 & 7 & 4 & 2 & 3 & 6 & 1\\
\hline
 2 & 6 & \SmallArray{3-6\\+7} & 6 & 5 & 1 & 4\\
\hline
 4 & 1 & \SmallArray{2-4\\+6} & 5 & 4 & 3 & 7\\
\hline
 6 & 2 & 4 & 1 & 7 & 5 & 3\\
\hline
\end{array}
\hspace{1cm}\rightarrow\hspace{1cm}
\begin{array}{|c|c|c|c|c|c|c|}
\hline
 3 & 5 & 4 & 7 & 1 & 2 & 6\\
\hline
 1 & 4 & 4 & 4 & 6 & 7 & 2\\
\hline
 7 & 3 & 4 & 3 & 2 & 4 & 5\\
\hline
 5 & 7 & 4 & 2 & 3 & 6 & 1\\
\hline
 2 & 6 & 4 & 6 & 5 & 1 & 4\\
\hline
 4 & 1 & 4 & 5 & 4 & 3 & 7\\
\hline
 6 & 2 & 4 & 1 & 7 & 5 & 3\\
\hline
\end{array}
\]

This is not possible for all off-centre columns, however. Trivially, we have seen it is not possible for columns 1 or $n$, but it is also not possible for $k$ to to occur $n$ times in column 2 or $n-1$ of $L(A)$.

To see why this is true, consider a column of $L(A)$ with $k$ occurring $n$ times. Since the first and last terms of the column in $A$ are $k$, then the second and second last terms of $A$ cannot contain any negative entry besides $-k$. Therefore this column contains at least 5 occurrences of $\pm k$. Since the second and second last columns in any ASM have at most 3 non-zero entries, these columns of $L(A)$ cannot contain the same entry $n$ times.

Although $k$ cannot occur $n$ times in column 2 or $n-1$ of $L(A)$, it is possible for $k$ to occur $n-2$ times, as the following example demonstrates.

\[\begin{array}{|c|c|c|c|c|c|c|}
\hline
 1 & 3 & 2 & 4 & 7 & 5 & 6\\
\hline
 2 & 4 & 1 & 3 & 5 & 6 & 7\\
\hline
 3 & \SmallArray{1-3\\+6}  & 3 & 2 & 4 & 7 & 5\\
\hline
 4 & \SmallArray{3-4\\+5}  & 4 & 7 & 6 & 1 & 2\\
\hline
 5 & \SmallArray{2-5\\+7}  & 5 & 6 & 3 & 4 & 1\\
\hline
 6 & 4 & 7 & 5 & 1 & 2 & 3\\
\hline
 7 & 5 & 6 & 1 & 2 & 3 & 4\\
\hline
\end{array}
\hspace{1cm}\rightarrow\hspace{1cm}
\begin{array}{|c|c|c|c|c|c|c|}
\hline
1 & 3 & 2 & 4 & 7 & 5 & 6\\
\hline
 2 & 4 & 1 & 3 & 5 & 6 & 7\\
\hline
 3 & 4 & 3 & 2 & 4 & 7 & 5\\
\hline
 4 & 4 & 4 & 7 & 6 & 1 & 2\\
\hline
 5 & 4 & 5 & 6 & 3 & 4 & 1\\
\hline
 6 & 4 & 7 & 5 & 1 & 2 & 3\\
\hline
 7 & 5 & 6 & 1 & 2 & 3 & 4\\
\hline
\end{array}
\]

Returning to the general case of $n$ odd or even, let $k = \frac{n+1}{2}$ and let $m_i$ be the maximum possible number of occurrences of $\lfloor k \rfloor$ in row $i$ or column $i$ of any $n \times n$  ASHL. Using the previous analysis, we have the following.
\[m_1 = 1,\:\: n-2 \leq m_2 \leq n-1,\:\: n-2 \leq m_3 \leq n,\:\: \dots,\:\: n-2 \leq m_{\lfloor k \rfloor-1} \leq n,\:\: m_{\lfloor k \rfloor} = n,\]
\[m_{\lceil k \rceil} = n,\:\: n-2 \leq m_{\lceil k \rceil+1} \leq n,\:\: \dots,\:\: n-2 \leq m_{n-2} \leq n,\:\: n-2 \leq m_{n-1} \leq n-1,\:\: m_n = 1\]

Any of our attempts to maximise the number of occurences of $k$ in one particular off-centre row or column has resulted in a reduction of the total number of occurrences of $k$ throughout the ASHL. While the bounds described above apply to an individual row or column, there appears to be a more global restriction on the total number of occurrences of $k$ that has yet to be characterised.

As previously discussed in \cite{ASHM-decomp}, the theoretical best construction would result in an $n \times n$ ASHL wherein $k$ occurs as an entry $(n-2)^2 + 4$ times. This would be achieved by each entry in the inner $(n-2) \times (n-2)$ square being equal to $k$, as well as one entry in each of the outer rows/columns. As in that paper, we are still looking for a construction that achieves this for $n > 7$ or an argument that proves that this is not possible.


\end{document}